\newtheorem{lem}{Lemma}[section]
\newtheorem{prop}{Proposition}[section]
\newtheorem{cor}{Corollary}[section]
\newtheorem{thm}{Theorem}[section]
\theoremstyle{definition}
\newtheorem{definition}{Definition}[section]
\theoremstyle{remark}
\theoremstyle{remark}
\newtheorem{remark}{Remark}[section]
\numberwithin{equation}{section}
\newcommand{\N}{{\mathbb N}}
\newcommand{\R}{{\mathbb R}}
\definecolor{blu}{rgb}{0,0,1}
\title[Uniqueness of solutions]{Remarks on the uniqueness for quasilinear elliptic equations with quadratic growth conditions}
\author[D. Arcoya]{David Arcoya}%$^\dagger$}
\thanks{D. A. is supported by FEDER-MEC (Spain) MTM2012-31799 and Junta de Andaluc\'{\i}a FQM-116.}
\address{David Arcoya 
\newline\indent
Departamento de An\'alisis Matem\'atico, Universidad de Granada, 
\newline\indent
C/Severo Ochoa, 18071 Granada, Spain }
\email{darcoya@ugr.es}
\author[C. De Coster]{Colette De Coster }%$^*$}
\address{Colette de Coster
\newline\indent
Universit\'e de Valenciennes et du Hainaut Cambr\'esis
\newline\indent
LAMAV,  FR CNRS 2956, 
\newline\indent
Institut des Sciences et Techniques de Valenciennes
\newline\indent
F-59313  Valenciennes Cedex 9, France}
\email{Colette.DeCoster@univ-valenciennes.fr}
\author[L. Jeanjean]{Louis Jeanjean}%$^*$}
\address{Louis Jeanjean
\newline\indent
Laboratoire de Math\'ematiques (UMR 6623)
\newline\indent
Universit\'{e} de Franche-Comt\'{e}
\newline\indent
16, Route de Gray 25030 Besan\c{c}on Cedex, France}
\email{louis.jeanjean@univ-fcomte.fr}
\author[K. Tanaka]{Kazunaga Tanaka}%$^*$}
\address{Kazunaga Tanaka
\newline\indent
Department of Mathematics, 
\newline\indent
School of Science and Engineering
\newline\indent
Waseda University
\newline\indent
3-4-1 Ohkubo, Shijuku-ku, Tokyo 169-8555, Japan}
\email{kazunaga@waseda.jp}
\begin{document}
\subjclass[2010]{35J62, 35A02,  35J25}

\keywords{Quasilinear elliptic equations, quadratic growth in the gradient, uniqueness of solution.}

\begin{abstract} In this note we present some uniqueness and comparison results for a class of problem of the form
\begin{equation}
\label{EE0}
\begin{array}{c}
- L u = H(x,u,\nabla u)+ h(x), \quad  
u \in H^1_0(\Omega) \cap L^{\infty}(\Omega),
\end{array}
\end{equation}
where $\Omega \subset \R^N$, $N \geq 2$ is a bounded domain, $L$ is a general  elliptic second order linear operator with bounded coefficients and  $H$ is allowed to have a critical growth in the gradient. In some cases our assumptions prove to be sharp.

\end{abstract}
\maketitle

%\input introd-notation.tex

%%%%%%%%%%%%%%%%%%%%%%%%%%%%%%%%%%%

\section{Introduction}

For  a bounded domain $\Omega \subset \R^N$ ($N \geq 2$) and a function 
$h \in L^p(\Omega)$ for some 
$p > \frac{N}{2}$ we consider the problem
\begin{equation}
\label{EE1}
\begin{array}{c}
- L u = H(x,u,\nabla u)+ h(x), \quad  
u \in H^1_0(\Omega) \cap L^{\infty}(\Omega),
\end{array}
\end{equation}
where  $L$ is a general  elliptic second order linear operator  and  $H: \Omega \times \R \times \R^N \to \R $ is a Carath\'eodory function which satisfy the assumptions:
\vspace{2mm}
\begin{itemize}
\item[(L)] There exists  a family of functions $(a^{ij})_{1\leq i,j\leq N}$ with $a^{ij}\in L^{\infty}(\Omega)\cap W^{1,\infty}_{loc}(\Omega)$ %\footnote{\color{red} I don't see why do we need to impose that $a^{ij}\in L^{\infty}(\Omega)$. Where is it used?}
  such that
$$
Lu=\sum_{i,j} \frac{\partial}{\partial x_j} \left(a^{ij}(x)\frac{\partial u}{\partial x_i}\right)
$$
and, there exists $\eta>0$ such that, for  a.e.  $x \in \Omega$  and all $ \xi \in \R^N$, 
$$ 
\sum_{i,j}a^{i,j}(x)\xi_i\xi_j\geq \eta |\xi|^2.
$$
\item[(H1)] There exists a continuous function $C_1 : \R^+ \to \R^+$  and a function $ b_1 \in L^p(\Omega)$  such that, for  a.e.  $x \in \Omega,$  all $ u \in \R$ and all $ \xi \in \R^N,$
$$ |H(x,u,\xi)| \leq C_1(|u|)(|\xi|^2 + b_1(x)).$$
\item[(H2)] There exists a function $b_2 \in L_{loc}^N(\Omega)$ and a continuous function $C_2 : \R^+ \times \R^+ \to \R$ such that, for a.e. $ x \in \Omega,$  all $ u_1, u_2 \in \R$  with $ u_1 \geq u_2$ and all $ \xi_1, \xi_2 \in \R^N,$
$$H(x,u_1,\xi_1) - H(x,u_2,\xi_2) \leq C_2(|u_1|,|u_2|) (|\xi_1| + |\xi_2| + b_2(x)) |\xi_1 - \xi_2|.$$  
\end{itemize}
As we shall see in the proof of Corollary~\ref{corolario},   a sufficient condition for (H2)
is that 
for a.e. $x\in \Omega$,
 $H(x,\cdot,\cdot)\in {\mathcal C}^1(\R\times\R^{N})$  with
\begin{equation} \label{H3-1}
\frac{\partial H}{\partial u}(x,u,\xi)\leq 0, \ \mbox{a.e. } x\in \Omega, \ \forall  u\in \mathbb R, \ \forall \xi \in \R^N,
\end{equation}
and that there exists  a function $b_3\in L^N_{loc}(\Omega)$ and a continuous nondecreasing function $C:\mathbb R^+\times \mathbb R^+\to \mathbb R^+$ satisfying
\begin{equation} \label{H3-2}
\left|\frac{\partial H}{\partial \xi}(x,u,\xi)\right| \leq C(|u|)(|\xi|+b_3(x)), \ \mbox{a.e. } x\in \Omega, \ \forall  u\in \mathbb R, \ \forall \xi \in \R^N.
\end{equation}
Uniqueness of solution for problem \eqref{EE1} (with $Lu=\Delta u$) has been first studied in the work 
\cite{BaMu} and after improved in \cite{BaBlGeKo} by requiring weaker regularity 
conditions on the data.   The reader can also see  additional uniqueness results in \cite{B-P} for subcritical nonlinear term $H$  (with respect to $\xi$), i.e, when its growth is less than a power $|\xi|^q$ with $q<2$, and in the work \cite{AS} for the case that $H$ has a singularity at $u=0$.
 
Specifically,  in \cite{BaBlGeKo} the uniqueness of solution for every $h$ is proved  when it is assumed condition \eqref{H3-2} and the following strengthening of \eqref{H3-1}:
$$
\frac{\partial H}{\partial u}(x,u,\xi)\leq-d_0< 0, \ \mbox{a.e. } x\in \Omega, \ \forall  u\in \mathbb R, \ \forall \xi \in \R^N. 
$$
However, in the case that it is only assumed the general hypothesis \eqref{H3-1} (together with \eqref{H3-2}), the authors require to the function $h$ to be sufficiently small in an appropriate sense. Furthermore, adapting the arguments of \cite{BaBlGeKo}, the case where \eqref{H3-1}-\eqref{H3-2} hold and $h$ has sign can also be covered.
Nevertheless,  the treatment of the general case \eqref{H3-1}-\eqref{H3-2} with no assumptions on $h$ seems out of reach with the approach of  \cite{BaBlGeKo,BaMu}.

The special case of (\ref{EE1})  given by
\begin{equation}\label{SC}
- \Delta u = d(x) u + \mu(x) |\nabla u|^2 + h(x), \quad u \in H^1_0(\Omega) \cap L^{\infty}(\Omega)
\end{equation}
 is studied in  \cite{ArDeJeTa} by an alternative approach. Indeed, if $d, h \in L^p(\Omega)$ for some $ p > \frac{N}{2}, \mu \in L^{\infty}(\Omega)$, then it is proved that 
 (\ref{SC}) has at most one  solution as soon as $d \leq 0$. Actually this condition is also necessary since  \cite[Theorem 1.3]{ArDeJeTa}  proves that  (\ref{SC}) may have two solutions if $d\gneqq 0$. See also, in that direction, Theorem 2  in \cite{JeSi} or Theorem 1,(iv)  in \cite{Si}. 
 %Thus Theorem \ref{main} is sharp in the case of equation (\ref{SC}). 
 We also mention that a general condition which guarantees the existence of one solution to (\ref{SC}) is derived in \cite{ArDeJeTa}. This condition always hold when $d <0$ but also widely when $d \leq 0$. For example, we have existence of one solution whenever $\mu$ and $h$ have opposite  sign.
\medbreak

 The aim of this paper is to show that the approach of \cite{ArDeJeTa} can be 
 generalized to treat, under the assumptions (L)-(H1)-(H2), equation (\ref{EE1})  and 
 thus to cover additional situations where the approach of \cite{BaBlGeKo,BaMu} is 
 not applicable.  As a counterpart of our approach we need to assume that the 
 boundary of $\Omega$ is sufficiently smooth, namely that
 $\Omega$ satisfies the following condition (A) of \cite[p.6]{LU68}. 

\begin{definition}
Let $\Omega \subset \R^N $ be an open set.  We say that  $\Omega$ satisfies condition (A)   provided   there exist 
$r_0, \theta_0>0$ such that  if $x\in \partial\Omega$ and
$ 0<r<r_0$, then
$$
%\left.
%        \begin{array}{c}
%                      x\in \partial\Omega
%\\
%                      0<r<r_0
%\\
%                  \Omega_r \mbox{ a component of } \Omega\cap B_r(x)
%         \end{array}    
%                        \right\}
%\implies
         \mbox{meas\,} \Omega_r \leq (1-\theta_0)  \mbox{meas\,}  B_r(x),
$$
for every component $\Omega_r $ of $\Omega\cap B_r(x)$, where $B_r(x)$ denotes the ball of radius $r$ centered at the point $x$.
\end{definition}

 For the reader convenience, we shall prove in the Appendix (see Lemma \ref{3.1}), that condition (A) holds true if $\partial \Omega$ is  Lipschitz.\medbreak

The main result of this paper is the following theorem.

\begin{thm}\label{main} 
Assume that (L)-(H1)-(H2) hold and that  $\Omega$ satisfies condition (A). Then (\ref{EE1}) has at most one solution. 
\end{thm}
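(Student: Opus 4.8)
The plan is to argue by contradiction and show that the positive part $w := (u_1-u_2)^+$ of the difference of two solutions vanishes identically; exchanging the roles of $u_1$ and $u_2$ then yields $u_1 = u_2$. Since $u_1, u_2 \in L^\infty(\Omega)$, set $M := \max(\|u_1\|_{\infty}, \|u_2\|_{\infty})$ and freeze the constants $C_1 := C_1(M)$ and $C_2 := C_2(M,M)$ coming from (H1)--(H2). Subtracting the two equations gives, in the weak sense,
$$-L(u_1-u_2) = H(x,u_1,\nabla u_1) - H(x,u_2,\nabla u_2),$$
and on the set $\{u_1 > u_2\}$ hypothesis (H2) bounds the right-hand side by $C_2(|\nabla u_1| + |\nabla u_2| + b_2)|\nabla w|$. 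The obstacle already visible here is that the coefficient $C_2(|\nabla u_1| + |\nabla u_2| + b_2)$ of the first-order term lies only in $L^2(\Omega) + L^N_{loc}(\Omega)$, so a direct maximum principle is unavailable and the critical gradient growth must be tamed by hand.

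First I would test the difference equation against the exponential function $\phi := \Phi(w)$, with $\Phi(s) := \gamma^{-1}(e^{\gamma s}-1)$ and $\gamma$ large (depending only on $\eta, C_1, C_2$ and $M$). On $\{u_1 > u_2\}$ one has $\nabla\phi = \Phi'(w)\nabla w$ and $\Phi' \geq \gamma\Phi$, so the ellipticity condition (L) produces a coercive term $\eta\int \Phi'(w)|\nabla w|^2$ on the left. On the right, writing $|\nabla u_1| + |\nabla u_2| \leq 2|\nabla u_2| + |\nabla w|$ and applying Young's inequality, the quadratic contribution $C_2\int \Phi(w)|\nabla w|^2$ is absorbed by the coercive term precisely because $\eta\Phi' \geq C_2\Phi$ once $\gamma$ is large. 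This is the step where hypothesis (H1), through the $L^\infty$ bound, and the exponential weight conspire to control the quadratic growth in the gradient.

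After absorption one is left with an estimate of the shape
$$\int_{\{u_1>u_2\}}\Phi'(w)|\nabla w|^2 \leq C\int_{\{u_1>u_2\}} \frac{\Phi(w)^2}{\Phi'(w)}\bigl(|\nabla u_2|^2 + b_2^2\bigr),$$
which by itself yields only an a priori bound on $w$, not its vanishing: this is exactly the point at which the approach of \cite{BaBlGeKo,BaMu} requires $h$ to be small. To close the argument without any smallness assumption I would iterate the estimate on the super-level sets $A_k := \{u_1 - u_2 > k\}$, testing instead with $\Phi((w-k)^+)$; combined with the Sobolev inequality on $\Omega$ this produces a recursive inequality of Stampacchia type for $|A_k|$. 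Here the role of condition (A) is to furnish the boundary regularity and the higher gradient integrability $\nabla u_i \in L^q(\Omega)$ with $q > N$ (available since $h \in L^p$ with $p>N/2$) that make $\int_{A_k}(|\nabla u_2|^2 + b_2^2)$ decay fast enough in $|A_k|$ for the iteration to force $|A_k| = 0$.

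The main difficulty I anticipate is precisely this last step: passing from the coercive a priori bound to the conclusion $w \equiv 0$. The differential inequality $-Lw \leq C_2(|\nabla u_1| + |\nabla u_2| + b_2)|\nabla w|$ with an $L^2$ first-order coefficient is, on its own, insufficient, since such inequalities admit nontrivial nonnegative subsolutions vanishing on the boundary. The decisive input must therefore come from exploiting the full equation through the exponential change of unknown together with the regularity supplied by condition (A), so that the transformed difference genuinely obeys a comparison principle; verifying that the iteration indeed starts at the level $k=0$, rather than merely at some positive level, is the delicate quantitative heart of the proof.
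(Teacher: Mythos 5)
Your reduction to showing $(u_1-u_2)^+ \equiv 0$ and your use of (H2) on $\{u_1 > u_2\}$ match the paper's starting point, but the core of your argument has a genuine gap, which you yourself flag without resolving: the passage from the coercive estimate to $w \equiv 0$. The exponential substitution $\Phi(w)$ does absorb the quadratic term, but what remains is an inequality of the form $\int \Phi'(w)|\nabla w|^2 \lesssim \int_{\{w>0\}}(|\nabla u_2|+b_2)^2\,\Phi(w)^2/\Phi'(w)$, and a Stampacchia iteration on the super-level sets $A_k$ starting from $k=0$ cannot close it: without a smallness assumption on the data, such a recursion yields only an upper bound for $w$ (which is already known, the solutions being bounded), not its vanishing --- this is exactly why the paper states that the Barles--Murat scheme ``seems out of reach'' in the general case. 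Moreover, the quantitative input you invoke does not exist: condition (A) gives global H\"older continuity $u_i \in {\mathcal C}^{0,\alpha}(\overline\Omega)$ (Proposition \ref{Lem1}), not global gradient integrability; the gradient bounds are purely interior, $u_i \in W^{1,N}_{loc}(\Omega)$ (Lemma \ref{regu}, via the Giaquinta estimate and the Miranda interpolation bootstrap), and $b_2$ is only assumed to lie in $L^N_{loc}(\Omega)$, so an integral like $\int_{A_0}(|\nabla u_2|^2+b_2^2)$ over a set reaching the boundary need not even be finite. Your plan of starting the iteration at $k=0$ collides head-on with this purely local regularity.

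The missing idea is that the level-set argument must be run from the top, not from $k=0$. In the paper's Lemma \ref{UUniqueness} (the Bottaro--Marina argument, as adapted by Betta--Mercaldo--Murat--Porzio) one tests directly with $(v-k)^+$, where $v=u_1-u_2$ and $k>0$: since $v \in {\mathcal C}(\overline\Omega)$ and $v\le 0$ on $\partial\Omega$, the support of $(v-k)^+$ is compact in $\Omega$, so the interior $L^N$ bounds suffice, and H\"older with exponents $(N,2,2^*)$ plus Sobolev give $\eta\|\nabla(v-k)^+\|_{L^2}^2 \le F(k)\,\|\nabla(v-k)^+\|_{L^2}^2$ with $F(k)= \mathcal{S}_N^{-1}M\,\||\nabla u_1|+|\nabla u_2|+b_2\|_{L^N(A_k\cap Z)}$. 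Here $Z$ discards the at most countably many level sets of $v$ of positive measure, on which $\nabla v=0$ a.e.; this makes $F$ continuous and non-increasing with $F(\|v^+\|_\infty)=0$. If $v^+\not\equiv 0$, one picks $k_0<\|v^+\|_\infty$ with $F(k_0)<\eta$, absorbs, and concludes $v\le k_0$, contradicting the definition of $\|v^+\|_\infty$. No exponential change of unknown and no iteration are needed: one well-chosen level near the supremum does the job, and this is precisely the step your proposal leaves open.
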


The proof of Theorem \ref{main} is divided into two main steps. First we show that any solution of (\ref{EE1}) belongs to ${\mathcal C}^{0,\alpha}(\overline{\Omega}) \cap W^{1,N}_{loc}(\Omega)$ for some $\alpha \in (0,1)$. This result is obtained combining classical regularity arguments from \cite{Gi,LU68} which allow to conclude that it belongs to  ${\mathcal C}^{0,\alpha}(\overline{\Omega}) \cap W^{1,q}_{loc}(\Omega)$ for some $\alpha \in(0,1)$ and some  $q >2$. Here the fact that our solutions are bounded seems essential. Then a bootstrap argument  of \cite{BeFr, Fr} (see also \cite{DoGi}) comes into play. The key ingredient of this bootstrap is an interpolation result due to Miranda \cite {Mi} which says that any element of ${\mathcal C}^{0,\alpha}(\overline{\Omega}) \cap W^{2,\frac{q}{2}}_{loc}(\Omega)$ belongs to $W^{1,t}_{loc}(\Omega)$ for a $t >q$. Having obtained the announced required regularity the second step consists in 
%developing a contradiction argument to prove the uniqueness. Here we adapt an argument from \cite{BeMeMuPo}, based in turn on an original idea from \cite{BoMa}. 
%\medskip
%
%Note that on our way to uniqueness, we also establish a comparison principle. Roughly speaking, we show  in Lemma~\ref{UUniqueness} that if $u_1, u_2 \in H^1_0(\Omega) \cap W^{1,N}_{loc}(\Omega) \cap {\mathcal C}(\overline{\Omega})$
%are respectively a lower solution and an upper solution of (\ref{EE1}), then $u_1 \leq u_2$ in $\Omega $. The uniqueness then follows from this comparison principle. %The same approach was already used in \cite{BaMu, BaBlGeKo}. 
%
 establishing a comparison principle. Roughly speaking, we adapt an argument from \cite{BeMeMuPo}, based in turn on an original idea from \cite{BoMa} to show  in Lemma~\ref{UUniqueness} that if $u_1, u_2 \in H^1_0(\Omega) \cap W^{1,N}_{loc}(\Omega) \cap {\mathcal C}(\overline{\Omega})$
are respectively a lower solution and an upper solution of (\ref{EE1}), then $u_1 \leq u_2$ in $\Omega $. The uniqueness then follows from this comparison principle. 
\medbreak

As we already mentioned  Theorem \ref{main} is sharp for equation (\ref{SC}). More globally the condition (H2), which in essence express the fact that $u \to H(x,u,\xi)$ is a non decreasing function and $\xi\to H(x,u,\xi)$ is locally Lipschitz, appears to us as an  almost necessary condition to guarantee the uniqueness.
\vspace{2mm}

Throughout the rest of the note we assume that $N \geq 3$.  The easier case $N=2$ is left to the reader.

%%%%%%%%%%%%%%%%%%%%%%%%%%%%%%%%%%%

\section{Uniqueness results}
\label{Sectionuniqueness-0}

First we show that, when condition (A) holds, any solution of (\ref{EE1}) belongs to ${\mathcal C}^{0, \alpha}(\overline{\Omega})\cap W^{1,q}_{loc}(\Omega)$ 
for some $\alpha \in (0,1)$ and  $q >2$. This follows directly from the following two classical regularity results.

\begin{prop}
\label{Lem1}
Assume that  $\Omega$ satisfies the regularity condition (A) and that condition  (L) holds. Let $u$ be a solution of 
$$ 
-L u + a(x,u, \nabla u) =0, \quad u \in H^1_0(\Omega) \cap L^{\infty}(\Omega).
$$
If there exists a constant $\mu >0$ and a function $b_1 \in L^p(\Omega)$ with $p > \frac{N}{2}$ such that
$$ 
|a(x,u,\xi)| \leq \mu  \left[ |\xi|^2 +b_1(x) \right], \mbox{ a.e. } x\in\Omega, \ \forall u\in \mathbb R, \ \forall \xi \in \mathbb R^N,
$$
then $u\in {\mathcal C}^{0,\alpha}(\overline \Omega)$ for some $\alpha   \in (0,1)$.
\end{prop}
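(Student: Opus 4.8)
The plan is to deduce Hölder continuity from the De Giorgi--Nash--Moser theory, in the form valid up to the boundary proved in \cite{LU68}. Since by hypothesis $u \in L^\infty(\Omega)$, write $M := \|u\|_{L^\infty(\Omega)}$; the whole argument takes place in the range $|u| \le M$, and the bound $M$ will enter the final Hölder exponent and constant. The only genuine difficulty is the critical quadratic growth $|\xi|^2$ of $a$ in the gradient: this term sits exactly at the borderline where the naive linear Caccioppoli estimate fails, and it must be absorbed into the coercive part of the equation. The device for doing so is an exponential change in the test function, which exploits precisely the boundedness of $u$.

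Concretely, I would first record the weak formulation: for every $\varphi \in H^1_0(\Omega) \cap L^\infty(\Omega)$,
$$\int_\Omega \sum_{i,j} a^{ij}(x)\,\partial_i u\,\partial_j \varphi \,dx = -\int_\Omega a(x,u,\nabla u)\,\varphi\,dx.$$
Fix a level $k$ and a cutoff $\zeta \in C_c^\infty(\Omega)$, and test with $\varphi = \zeta^2\big(e^{\beta (u-k)^+}-1\big)$ for a constant $\beta>0$ to be chosen in terms of $\mu$ and the ellipticity constant $\eta$ of (L). On the set $\{u>k\}$ the coercive term produces (a multiple of) $\eta \int \zeta^2 e^{\beta(u-k)^+}|\nabla u|^2$, while the growth bound $|a|\le \mu(|\xi|^2+b_1)$ produces on the right a term controlled by $\mu\int \zeta^2\big(e^{\beta(u-k)^+}-1\big)|\nabla u|^2$. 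Since $\mu\big(e^{\beta(u-k)^+}-1\big)\le \tfrac12\eta\beta\, e^{\beta(u-k)^+}$ as soon as $\beta\ge 2\mu/\eta$, this quadratic term is absorbed; and because $0\le (u-k)^+\le 2M$, the weights $e^{\beta(u-k)^+}$ are bounded above and below by positive constants depending only on $\beta$ and $M$, which is exactly where boundedness is used. After absorption one is left, for both $(u-k)^+$ and the symmetric choice $(k-u)^+$, with a Caccioppoli inequality of the form
$$\int \zeta^2 |\nabla (u-k)^+|^2 \le C\int |\nabla \zeta|^2\big((u-k)^+\big)^2 + C\int_{\{u>k\}} \zeta^2\big(b_1 + 1\big),$$
with $C = C(\mu,\eta,M)$; the role of the assumption $b_1 \in L^p(\Omega)$ with $p>N/2$ is exactly to make the last integral a subcritical, and hence harmless, perturbation in the De Giorgi iteration. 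These are precisely the inequalities defining membership in the De Giorgi class $B_2(\Omega,M,\dots)$ of \cite{LU68}.

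Having placed $u$ in this class, interior Hölder continuity is the De Giorgi--Nash estimate. For continuity up to $\partial\Omega$ I would use that $u$ has zero boundary trace together with condition (A): the measure-density lower bound $\mathrm{meas}\,\big(B_r(x)\setminus\Omega_r\big) \ge \theta_0\,\mathrm{meas}\,B_r(x)$ furnished by condition (A) guarantees that the sublevel sets of $u$ near the boundary occupy a controlled fraction of each ball, which is the hypothesis under which the De Giorgi oscillation-decay estimate remains valid at boundary points. Combining the interior and boundary estimates, and noting that the constants depend only on $\eta$, $\mu$, $M$, $\|b_1\|_{L^p}$, $r_0$ and $\theta_0$, yields $u \in {\mathcal C}^{0,\alpha}(\overline{\Omega})$ for some $\alpha\in(0,1)$.

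The step I expect to be the main obstacle is the absorption of the quadratic term: one must choose $\beta$ and organise the Young-inequality splitting of the mixed term so that, after absorption, the surviving constant $C(\mu,\eta,M)$ does not destroy the scaling homogeneity required by De Giorgi's iteration, and so that the $b_1$-contribution stays subcritical. Everything else---the interior estimate and its transfer to the boundary via condition (A)---is then a citation to the De Giorgi--Nash--Moser machinery of \cite{LU68} (equivalently \cite{Gi}).
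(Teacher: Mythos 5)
Your proposal is correct in outline, but it is worth knowing that the paper's entire proof of this proposition is a one-line citation: the statement is a special case of \cite[Theorem IX-2.2, p.441]{LU68}, whose hypotheses (uniform ellipticity, quadratic growth of the lower-order term with $b_1\in L^p$, $p>\tfrac N2$, bounded solution, and condition (A) on $\partial\Omega$) are exactly those listed here. What you have written is, in substance, a reconstruction of the proof of that cited theorem rather than a different argument: the exponential test function $\zeta^2\bigl(e^{\beta(u-k)^+}-1\bigr)$ with $\beta\geq 2\mu/\eta$ to absorb the critical quadratic term (possible only because $\|u\|_\infty=M<\infty$), the resulting Caccioppoli inequalities for $(u-k)^+$ and $(k-u)^+$ placing $u$ in the De Giorgi class of \cite{LU68}, the interior De Giorgi--Nash oscillation estimate, and the boundary estimate using the zero trace of $u$ together with the measure-density bound supplied by condition (A) --- this is precisely the Ladyzhenskaya--Ural'tseva machinery behind Theorem IX-2.2. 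The trade-off: the paper's citation is shorter and inherits full rigor from the reference, while your version makes visible where the two structural hypotheses enter (boundedness of $u$ for the absorption, condition (A) for the boundary decay), which the black-box citation hides; on the other hand, as written yours is still a sketch, since the De Giorgi iteration and the boundary oscillation-decay lemma are themselves deferred to \cite{LU68} or \cite{Gi}, so in the end you would be citing the same source at a finer granularity. Your identification of the absorption step as the only place where the quadratic growth is genuinely dangerous, and of $p>\tfrac N2$ as what keeps the $b_1$-term subcritical in the iteration, is accurate.
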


\begin{proof}
This result is a special case of \cite[Theorem IX-2.2 p.441]{LU68}. 
\end{proof}

\begin{prop}
\label{Lem2}
Assume that $L$ satisfies condition (L).
Let $u$ be a solution of 
$$ 
- L u + a(x,u, \nabla u) =0, \quad u \in H^1_0(\Omega) \cap L^{\infty}(\Omega).
$$
If there exists a $\mu >0$ and a function $b_1 \in L^s(\Omega)$ for some $s >1$ such that
$$ 
|a(x,u,\xi)| \leq \mu  \left[ |\xi|^2 + b_1(x)\right], \mbox{ a.e. } x\in\Omega, \ \forall u\in \mathbb R, \ \forall \xi \in \mathbb R^N,
$$
then there exists an exponent $q >2$ such that $u\in W^{1,q}_{loc}( \Omega)$.
\end{prop}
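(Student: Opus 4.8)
The plan is to derive a local Caccioppoli estimate, upgrade it to a reverse H\"older inequality with increasing supports via Sobolev--Poincar\'e, and then conclude by a Gehring-type higher integrability lemma. Set $M:=\|u\|_{L^\infty(\Omega)}$ and let $\Lambda$ bound the coefficients $a^{ij}$ in $L^\infty$. Fix a ball $B_R=B_R(x_0)$ with $\overline{B_R}\subset\Omega$, a cut-off $\zeta\in C_c^\infty(B_R)$ with $\zeta\equiv1$ on $B_{R/2}$, $0\le\zeta\le1$, $|\nabla\zeta|\le C/R$, and put $k:=\frac{1}{|B_R|}\int_{B_R}u$ and $v:=u-k$, so that $\|v\|_{L^\infty}\le 2M$.

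The first and main step is the Caccioppoli inequality. Since $a$ has quadratic (``natural'') growth in the gradient, the naive choice $\varphi=\zeta^2 v$ fails, because the term $\mu\int\zeta^2|\nabla u|^2|v|$ produced by the growth bound cannot be absorbed. I would instead test the weak formulation with the exponential function
\[
\varphi=\zeta^2\,v\,e^{\lambda v^2},
\]
admissible because $v\in H^1_0(\Omega)\cap L^\infty(\Omega)$ and $\zeta$ is compactly supported. With $g(v)=v e^{\lambda v^2}$, so $g'(v)=(1+2\lambda v^2)e^{\lambda v^2}\ge 1$, the ellipticity condition (L) gives the principal lower bound $\eta\int\zeta^2 g'(v)|\nabla u|^2$, the cross term $2\int\zeta\,a^{ij}\partial_i u\,\partial_j\zeta\,g(v)$ is controlled by Young's inequality, and the growth bound on $a$ yields the bad term $\mu\int\zeta^2|\nabla u|^2|v|e^{\lambda v^2}$. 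Using $1+2\lambda v^2\ge2\sqrt{2\lambda}\,|v|$ and choosing $\lambda\ge 2\mu^2/\eta^2$ lets me absorb this bad term into the principal one. As $|v|\le 2M$, the weight $e^{\lambda v^2}$ stays between $1$ and $e^{4\lambda M^2}$, so all exponential factors are harmless constants and I am left with
\[
\int_{B_{R/2}}|\nabla u|^2\,dx
\le \frac{C}{R^2}\int_{B_R}|u-k|^2\,dx + C\int_{B_R}|b_1|\,dx ,
\]
where $C$ depends only on $\eta,\Lambda,\mu,M$.

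The second step turns this into a reverse H\"older inequality. Writing $2_*:=\frac{2N}{N+2}<2$ for the subquadratic Sobolev exponent (so that its Sobolev conjugate is exactly $2$), I apply the Sobolev--Poincar\'e inequality to the zero-mean function $v=u-k$ and divide by $|B_R|$, which transforms the Caccioppoli estimate into
\[
\frac{1}{|B_{R/2}|}\int_{B_{R/2}}|\nabla u|^2\,dx
\le C\left(\frac{1}{|B_R|}\int_{B_R}|\nabla u|^{2_*}\,dx\right)^{2/2_*}
+ \frac{C}{|B_R|}\int_{B_R}|b_1|\,dx .
\]
Setting $w:=|\nabla u|^{2_*}$ and $p_0:=2/2_*=\frac{N+2}{N}>1$, this is a reverse H\"older inequality of the form $\frac{1}{|B_{R/2}|}\int w^{p_0}\le C\bigl(\frac{1}{|B_R|}\int w\bigr)^{p_0}+\frac{C}{|B_R|}\int f^{p_0}$ with $f:=|b_1|^{1/p_0}$. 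The hypothesis $b_1\in L^{s}(\Omega)$, $s>1$, says exactly that $f\in L^{sp_0}_{loc}$ with $sp_0>p_0$, so the Gehring-type higher integrability lemma applies and produces an $\varepsilon>0$ with $w\in L^{p_0+\varepsilon}_{loc}(\Omega)$, that is $|\nabla u|\in L^{q}_{loc}(\Omega)$ with $q=2_*(p_0+\varepsilon)=2+2_*\varepsilon>2$.

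I expect the only delicate point to be the Caccioppoli step, specifically the calibration of $\lambda$ and the bookkeeping of the weighted terms that makes the quadratic gradient term absorbable; the boundedness of $u$ is essential here, both to keep $e^{\lambda v^2}$ comparable to a constant and to make the absorption uniform. The remaining ingredients --- Sobolev--Poincar\'e and the passage to higher integrability --- are standard.
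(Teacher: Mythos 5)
Your argument is correct, but it is not the route the paper takes: the paper disposes of this proposition in one line, citing it as a special case of \cite[Proposition 2.1, p.145]{Gi}, so the entire argument is outsourced. What you have written is, in essence, a self-contained reconstruction of the proof behind that citation: the exponential test function $\zeta^2 v e^{\lambda v^2}$ to get a Caccioppoli inequality despite the natural (quadratic) growth --- this is exactly where boundedness of $u$ enters, both to make the absorption uniform and to keep $e^{\lambda v^2}$ between harmless constants --- followed by Sobolev--Poincar\'e to produce a reverse H\"older inequality for $w=|\nabla u|^{2N/(N+2)}$, and then the Giaquinta--Modica/Gehring higher-integrability lemma (which is, in fact, the main result of the same chapter of \cite{Gi}). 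Your calibration of $\lambda$ works: one has $\eta(1+2\lambda v^2)-\mu|v|\ge \tfrac{\eta}{2}(1+2\lambda v^2)$ as soon as $\lambda\ge \mu^2/(2\eta^2)$, so the choice $\lambda\ge 2\mu^2/\eta^2$ absorbs the bad term with room to spare, and the hypothesis $b_1\in L^s(\Omega)$, $s>1$, enters precisely as the $L^{sp_0}$-integrability of $f=|b_1|^{1/p_0}$ required by Gehring. Two small points to clean up: (i) $v=u-k$ is \emph{not} in $H^1_0(\Omega)$ (it misses the boundary condition by the constant $k$); the admissibility of $\varphi=\zeta^2 v e^{\lambda v^2}$ comes from the compact support of $\zeta$ together with $u\in H^1(\Omega)\cap L^\infty(\Omega)$, not from $v$ itself; (ii) you should invoke the Gehring lemma explicitly in the Giaquinta--Modica form (reverse H\"older inequalities with increasing supports, holding for all balls $B_R$ with $\overline{B_R}\subset\Omega$), since that is the version your inequality matches. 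With these cosmetic fixes your argument is a complete proof, and it buys something the paper's citation does not: it makes visible that the only structural inputs are ellipticity, the quadratic growth bound, $\|u\|_\infty<\infty$, and $b_1\in L^s$ with $s>1$, whereas the paper's one-line proof is shorter but leaves the mechanism (and the role of each hypothesis) hidden inside the reference.
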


\begin{proof}
This result is a special case of \cite[Proposition 2.1, p.145]{Gi}.
\end{proof} 
\vspace{1mm}

Clearly Propositions \ref{Lem1} and \ref{Lem2} apply to the solutions of (\ref{EE1}).
\vspace{3mm}

The information that an arbitrary solution of (\ref{EE1}) belongs to  ${\mathcal C}^{0, \alpha}(\overline{\Omega})\cap W^{1,q}_{loc}(\Omega)$ for some $\alpha \in (0,1)$ and  $q >2$ is the starting point of a bootstrap argument which relies on the following interpolation result due to C. Miranda \cite{Mi}.

\begin{prop}\label{Lem3} 
Let $ \omega \subset \R^N$ be a bounded domain   satisfying the cone property. Assume that $0 \leq \alpha <1$, $p \geq 1$ and let
$$t = \frac{p(2- \alpha) - \alpha}{1 - \alpha}.$$
Then any element of ${\mathcal C}^{0, \alpha}(\omega) \cap W^{2,p}(\omega)$ belongs to $W^{1,t}(\omega).$
\end{prop}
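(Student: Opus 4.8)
The plan is to reduce the inclusion to a pointwise gradient estimate and then integrate. By mollification it suffices to prove the bound for smooth $u$ with constants depending only on $N,\alpha,p$ and on the parameters in the cone property, and then to pass to the limit; so I assume $u$ smooth. First I would establish a local representation of the gradient adapted to the cone condition. Fix $x\in\omega$, let $\mathcal C=\mathcal C(x,\rho)\subset\omega$ be a cone with vertex $x$, fixed opening, and radius $\rho\le\rho_0$, and start from the one–dimensional Taylor identity with integral remainder along the segment from $x$ to $y$,
\[ u(y)-u(x)=\nabla u(x)\cdot(y-x)+\int_0^{|y-x|}\big(|y-x|-s\big)\,\partial^2_{ee}u(x+se)\,ds,\qquad e=\frac{y-x}{|y-x|}. \]
Multiplying by the components $(y-x)_k$ and integrating over $\mathcal C$, and using that the symmetric positive definite matrix $\int_{\mathcal C}(y-x)\otimes(y-x)\,dy$ is homogeneous of order $\rho^{N+2}$, I can invert it and write
\[ \nabla u(x)=\frac{1}{\rho^{N+2}}\int_{\mathcal C}A(y-x)\Big[u(y)-u(x)-\int_0^{|y-x|}\big(|y-x|-s\big)\,\partial^2_{ee}u(x+se)\,ds\Big]\,dy \]
for a fixed matrix $A$. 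The zeroth–order part is controlled by the Hölder seminorm $[u]_\alpha$ (integrating $|y-x|^{1+\alpha}$ over $\mathcal C$), yielding a term $\le C[u]_\alpha\rho^{\alpha-1}$; passing the remainder to polar coordinates and interchanging the order of integration in $s$ and the radial variable, the second–order part is bounded by the truncated Riesz potential $\int_{\mathcal C}|z-x|^{-(N-1)}|D^2u(z)|\,dz$, since $|\partial^2_{ee}u|\le|D^2u|$. Estimating this truncated $1$–potential by the Hardy–Littlewood maximal function gives, for every $\rho\in(0,\rho_0]$,
\[ |\nabla u(x)|\le C\big([u]_\alpha\,\rho^{\,\alpha-1}+\rho\,M(|D^2u|)(x)\big). \]

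Next I optimize in $\rho$. Balancing the two terms selects $\rho(x)\sim\big([u]_\alpha/M(|D^2u|)(x)\big)^{1/(2-\alpha)}$; on the set where this exceeds $\rho_0$ one simply takes $\rho=\rho_0$, and there both terms are dominated by $C[u]_\alpha\rho_0^{\alpha-1}$. In either case one reaches the pointwise interpolation bound
\[ |\nabla u(x)|\le C\Big(1+M(|D^2u|)(x)^{\frac{1-\alpha}{2-\alpha}}\Big), \]
with $C$ depending on $[u]_\alpha,\rho_0,N,\alpha$. Raising to the power $t$, integrating over the bounded set $\omega$, and invoking the maximal theorem (valid since $p>1$, so that $\|M(|D^2u|)\|_{L^p}\le C\|D^2u\|_{L^p}$) shows $\nabla u\in L^t(\omega)$ whenever $t\,\tfrac{1-\alpha}{2-\alpha}\le p$, i.e. $t\le\tfrac{p(2-\alpha)}{1-\alpha}$. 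Since $\tfrac{p(2-\alpha)}{1-\alpha}\ge\tfrac{p(2-\alpha)-\alpha}{1-\alpha}$ and $\omega$ is bounded (so the larger Lebesgue exponent embeds into the smaller one), the stated conclusion follows.

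The main obstacle is twofold. The genuinely technical core is running the local representation uniformly up to the boundary: near $\partial\omega$ a full ball $B_\rho(x)$ need not lie in $\omega$, which is precisely why the cone property is assumed, and one must invert the second–moment matrix of the cone and keep all constants uniform in $x$. The second delicate point is the endpoint $p=1$, where the maximal function is not bounded on $L^1$; there the slick maximal–function integration must be replaced by a weak–type $(1,1)$ argument (or a direct fractional–integration estimate), which yields exactly the value $t=2$ that the stated exponent $\tfrac{p(2-\alpha)-\alpha}{1-\alpha}$ gives at $p=1$. Obtaining the precise stated exponent rather than the slightly larger scale–invariant one is, by contrast, harmless, being absorbed by the bounded–domain embedding.
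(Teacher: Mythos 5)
The paper offers no proof to compare against: its ``proof'' of Proposition~\ref{Lem3} is a one-line citation of Miranda's Teorema IV, so your potential-theoretic argument is necessarily a different, self-contained route. For $p>1$ (and any $0\le\alpha<1$) it is sound modulo the two technicalities you yourself flag, and both are fixable: the second-moment matrix $\int_{\mathcal{C}}(y-x)\otimes(y-x)\,dy$ of the cone at $x$ is $\rho^{N+2}$ times a rotation conjugate of the moment matrix of the fixed reference cone, so its inverse has norm independent of $x$; and the reduction to smooth $u$ cannot be a naive mollification (near $\partial\omega$ the mollified function is not defined on the cone at $x$), but it can be replaced either by the a.e.\ Taylor formula for $W^{2,p}$ functions (Fubini along the rays of the cone) or by a mollification shifted along the cone axis. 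Granting these, your chain of estimates -- the bound $|\nabla u(x)|\le C\bigl([u]_\alpha\rho^{\alpha-1}+\rho\,M(|D^2u|)(x)\bigr)$, the optimization in $\rho$, and the maximal theorem -- is correct, and in fact for $p>1$ it proves more than the statement, namely the scale-invariant exponent $\frac{p(2-\alpha)}{1-\alpha}$, which dominates the stated $t$ on a bounded domain.

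The genuine gap is the corner case $\alpha=0$, $p=1$, which the statement includes ($0\le\alpha<1$, $p\ge1$) and for which your proposed repair fails. Your $p=1$ fix requires the power $s=t\,\frac{1-\alpha}{2-\alpha}$ of $M(|D^2u|)$ to satisfy $s<1$, so that the weak-type $(1,1)$ bound plus the layer-cake formula give $M(|D^2u|)\in L^{s}(\omega)$; this indeed works for $\alpha>0$, where $t=2$ gives $s=\frac{2(1-\alpha)}{2-\alpha}<1$. But at $\alpha=0$ one needs exactly $s=1$, and the maximal function of an $L^1$ function is in general not locally integrable: the weak-type bound then only yields $\nabla u\in L^{2,\infty}(\omega)$, not $L^{2}(\omega)$, and the ``direct fractional-integration estimate'' is worse still, since $I_1$ maps $L^1$ into $L^{N/(N-1),\infty}$ and $\frac{N}{N-1}<2$ for $N\ge 3$. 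That endpoint is nevertheless true -- it is the classical inequality $\|\nabla u\|_{L^2}^2\le C\|u\|_{L^\infty}\|D^2u\|_{L^1}$ (up to cutoffs and boundary terms), obtained by integrating $|\nabla u|^2$ by parts against $u$ -- so to cover the statement as written you must switch to a different mechanism at this corner rather than tune the maximal-function argument. The gap is immaterial for the way the paper uses the proposition, since Lemma~\ref{regu} applies it only with $p=q/2>1$ and $\alpha\in(0,1)$, but your proof as proposed does not cover the full range claimed.
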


\begin{proof} This result is \cite[Teorema IV]{Mi}.
\end{proof}

Gathering Propositions \ref{Lem1}, \ref{Lem2} and \ref{Lem3} we obtain

\begin{lem}\label{regu}
Assume that conditions (L) and (H1)  hold and that $\Omega$  satisfies condition (A). Then any solution of (\ref{EE1}) belongs to 
${\mathcal C}^{0,\alpha}(\overline{\Omega}) \cap W^{1,N}_{loc}(\Omega)$ for some $\alpha \in (0,1)$.
\end{lem}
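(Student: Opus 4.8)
The plan is to combine the three preceding propositions into a bootstrap on the integrability of $\nabla u$. First I would recast (\ref{EE1}) into the form covered by Propositions \ref{Lem1} and \ref{Lem2}: writing $a(x,u,\xi) := -H(x,u,\xi) - h(x)$, a solution $u$ solves $-Lu + a(x,u,\nabla u) = 0$. Since $u \in L^{\infty}(\Omega)$, the continuous function $C_1$ of (H1) is bounded along $u$ by $\mu_0 := C_1(\|u\|_{\infty})$, so (H1) together with $h \in L^p(\Omega)$, $p>\frac N2$, yields
$$|a(x,u,\xi)| \leq \mu_0(|\xi|^2 + b_1(x)) + |h(x)| \leq \mu\,(|\xi|^2 + \tilde b_1(x)),$$
with $\mu := \max(\mu_0,1)$ and $\tilde b_1 := b_1 + |h| \in L^p(\Omega)$. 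Hence Proposition \ref{Lem1} gives $u \in \mathcal{C}^{0,\alpha}(\overline{\Omega})$ for some $\alpha \in (0,1)$, and Proposition \ref{Lem2} (with $s=p>1$) gives $u \in W^{1,q}_{loc}(\Omega)$ for some $q>2$. This is the base case of the bootstrap.

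Next I would iterate. Suppose $u \in W^{1,q_k}_{loc}(\Omega)$ with $2 < q_k < N$. Then $|\nabla u|^2 \in L^{q_k/2}_{loc}(\Omega)$, and since $q_k/2 < \frac N2 < p$ we have $b_1, h \in L^{q_k/2}_{loc}(\Omega)$; thus by (H1) the right-hand side $f := H(x,u,\nabla u) + h$ lies in $L^{q_k/2}_{loc}(\Omega)$. Because the coefficients $a^{ij}$ are locally Lipschitz by (L), I can expand $Lu = \sum_{ij} a^{ij} \partial_{ij} u + \sum_{ij} (\partial_j a^{ij}) \partial_i u$ and move the first-order term, which lies in $L^{q_k}_{loc} \subset L^{q_k/2}_{loc}$, to the right-hand side. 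Interior $W^{2,s}$ estimates for the non-divergence operator $\sum_{ij} a^{ij}\partial_{ij}$ with continuous, uniformly elliptic coefficients then yield $u \in W^{2,q_k/2}_{loc}(\Omega)$. Applying Miranda's interpolation (Proposition \ref{Lem3}) on balls $\omega \Subset \Omega$, with the exponent $q_k/2$ and the Hölder exponent $\alpha$, gives $u \in W^{1,q_{k+1}}_{loc}(\Omega)$ with
$$q_{k+1} = \frac{(q_k/2)(2-\alpha) - \alpha}{1-\alpha}.$$

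The arithmetic then closes the argument: a direct computation gives $q_{k+1} - q_k = \frac{\alpha(q_k - 2)}{2(1-\alpha)} > 0$, so each step strictly increases the exponent; moreover $q \mapsto q_{k+1}$ is affine with slope $\frac{2-\alpha}{2(1-\alpha)} > 1$ whose only fixed point is $q=2$, so the iterates $q_k$ diverge and exceed $N$ after finitely many steps, say at index $k_0$. Every intermediate value $q_k$ with $k<k_0$ stays below $N$, which is exactly what validates the integrability of $f$ used above, so the chain of implications is consistent; choosing at each step a slightly smaller nested subdomain and recalling that $W^{1,q_{k_0}}(\omega) \hookrightarrow W^{1,N}(\omega)$ on bounded $\omega$ once $q_{k_0} \geq N$, I conclude $u \in \mathcal{C}^{0,\alpha}(\overline{\Omega}) \cap W^{1,N}_{loc}(\Omega)$.

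The main obstacle, I expect, is the interior second-order regularity step: one must verify that the divergence-form operator with merely $W^{1,\infty}_{loc}$ coefficients does produce $W^{2,s}_{loc}$ estimates for $L^s_{loc}$ right-hand sides, and that the auxiliary first-order term generated by differentiating the $a^{ij}$ does not spoil the gain in integrability. Controlling the shrinking of the local subdomains through the finitely many iterations, and checking that the data $b_1$ and $h$ remain in the relevant $L^{q_k/2}_{loc}$ spaces at each stage, is the bookkeeping that makes the bootstrap rigorous.
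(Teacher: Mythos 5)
Your proposal is correct and follows essentially the same path as the paper: Propositions \ref{Lem1} and \ref{Lem2} with $\mu$ determined by $\|u\|_\infty$ give the base regularity $\mathcal{C}^{0,\alpha}(\overline{\Omega})\cap W^{1,q}_{loc}(\Omega)$, then the same bootstrap alternates interior $W^{2,q_k/2}$ estimates for $-Lu=H(x,u,\nabla u)+h\in L^{q_k/2}_{loc}$ (the paper simply cites \cite[Theorem~3.8]{Tr87} for the step you sketch via the non-divergence rewriting) with Miranda's interpolation (Proposition \ref{Lem3}). The only cosmetic difference is in showing the exponents exceed $N$: you observe the recursion is affine with slope $\frac{2-\alpha}{2(1-\alpha)}>1$ and unique fixed point $2$, so the iterates diverge, while the paper argues by contradiction that a bounded increasing sequence would converge to the fixed point $2$, contradicting $t_n>q>2$.
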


\begin{proof}
Let $u$ be an arbitrary solution of (\ref{EE1}). As $u$ is bounded, we are in position to apply Propositions \ref{Lem1} and \ref{Lem2} with $\mu=\max_{[-\|u\|_{\infty}, \|u\|_{\infty}]}C_1(|u|)$. Then Propositions \ref{Lem1} and \ref{Lem2} implies that $ u \in {\mathcal C}^{0, \alpha}(\overline{\Omega})\cap W^{1,q}_{loc}(\Omega)$ for some $\alpha \in (0,1)$ and  $q >2$. If $q\geq N$ then the proof is done, while if $q<N$ we follow a bootstrap argument of \cite{BeFr, Fr}, see also \cite{DoGi}.  Since $u \in W^{1,q}_{loc}(\Omega)$,  the function $u$ is a weak solution of  
\begin{equation}\label{t1}
 - L u = \xi(x) ,
\end{equation}
with $\xi(x) =   H(x,u,\nabla u)  + h(x) \in L^{\frac{q}{2}}_{loc}(\Omega)$,   by $(H1)$.
By a standard $L^p$-regularity argument, see for 
example  \cite[Theorem~3.8]{Tr87}, we deduce that $u \in W^{2,\frac{q}{2}}_{loc}(\Omega)$. 
Now using Proposition \ref{Lem3} which is valid on any regular domain 
$\omega \subset \Omega$ it follows that 
$$u \in   W^{1,t_1}_{loc}(\Omega)  \quad \mbox{where} \quad t_1 =  \frac{\frac{q}{2} (2- \alpha) - \alpha}{1 - \alpha} >q .$$ If $t_1 \geq N$ we are again done. Otherwise from (\ref{t1}) and    the cited classical regularity argument, $u \in W^{2, \frac{t_1}{2}}_{loc}(\Omega)$. Denoting
\begin{equation}\label{t2}
t_n = \frac{\frac{t_{n-1}}{2} (2- \alpha) - \alpha}{1 - \alpha} > t_{n -1} > q > 2 
\end{equation}
by a bootstrap argument we get $u \in W^{2, \frac{t_n}{2}}_{loc}(\Omega)$ for all $n \in \N$ as long as $t_{n-1} < N$. We now claim that the  increasing  
sequence $\{t_n\}$  exceeds the value  $N$. Indeed, arguing by contradiction, if $t_n<N$ for every $n\in\mathbb N$, then the limit
% if we assume that $\{t_n\}$ has a finite limite 
$l$ of $\{ t_n\}$ has to be 
%we deduce from (\ref{t2}) that 
$l=2$. 
This  contradicts   that $t_n >q >2$. At this point the proof of the lemma is completed.
\end{proof}

The motivation to  observe that any solution of (\ref{EE1}) has an additional regularity appears in the next comparison principle in $H^1(\Omega)\cap W^{1,N}_{loc}(\Omega)\cap {\mathcal C}(\overline\Omega)$. Recall that $u_1$ is a {\it lower solution of \eqref{EE1}} if $u_1^+\in H^1_0(\Omega)$ and, for all $\varphi\in H^1_0(\Omega)\cap L^{\infty}(\Omega)$ with $\varphi\geq0$ a.e. $x\in \Omega$, we have
$$
 \sum_{i,j=1}^N \int_{\Omega} a^{ij} \frac{\partial u_1}{\partial x_i} \frac{\partial \varphi}{\partial x_j}  \leq \int_{\Omega} H(x,u_1,\nabla u_1)\,\varphi + \int_{\Omega} h\,\varphi.
$$
In the same way, $u_2$ is an {\it upper solution of \eqref{EE1}} if $u_2^-\in H^1_0(\Omega)$ and, for all $\varphi\in H^1_0(\Omega)\cap L^{\infty}(\Omega)$ with $\varphi\geq0$ a.e. $x\in \Omega$, we have
$$
\sum_{i,j=1}^N \int_{\Omega} a^{ij} \frac{\partial u_2}{\partial x_i} \frac{\partial \varphi}{\partial x_j}  \geq \int_{\Omega} H(x,u_2,\nabla u_2)\,\varphi + \int_{\Omega} h\,\varphi.
$$

\begin{lem}\label{UUniqueness}
Assume that the hypotheses (L) and (H2) hold. Then if  $u_1, u_2\in H^1(\Omega)\cap W^{1,N}_{loc}(\Omega)\cap {\mathcal C}(\overline\Omega)$ are respectively  a lower solution and an upper solution of \eqref{EE1}, then $u_1\leq u_2$ in $\Omega$. 
%In particular, if $u_1$ and $u_2$ are both solution of (\ref{EE1}), then they are identical.
\end{lem}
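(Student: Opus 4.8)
The plan is to prove the comparison as a weak maximum principle for $w:=u_1-u_2$, arguing by contradiction. First I would record the boundary information: since $u_1,u_2\in{\mathcal C}(\overline\Omega)$ and, being a lower (resp.\ upper) solution, $u_1^+\in H^1_0(\Omega)$ (resp.\ $u_2^-\in H^1_0(\Omega)$), one gets $u_1\le 0\le u_2$ on $\partial\Omega$, hence $w\le 0$ on $\partial\Omega$. Set $m:=\max_{\overline\Omega}w$ and suppose, for contradiction, that $m>0$. For a level $k\in(0,m)$ let $v_k:=(w-k)^+$. Because $w$ is continuous and $w\le 0<k$ on $\partial\Omega$, the set $\{w\ge k\}$ is compact in $\Omega$, so $v_k\in H^1_0(\Omega)\cap L^\infty(\Omega)$ has compact support contained in $\{w>k\}\subseteq\{u_1>u_2\}$; this makes $v_k$ an admissible nonnegative test function.

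Next I would subtract the weak inequality of the upper solution from that of the lower solution to obtain, for every admissible $\varphi\ge 0$,
\[
\sum_{i,j}\int_\Omega a^{ij}\,\partial_i w\,\partial_j\varphi \;\le\; \int_\Omega\big[H(x,u_1,\nabla u_1)-H(x,u_2,\nabla u_2)\big]\,\varphi .
\]
On $\{u_1>u_2\}$, which contains the support of $v_k$, hypothesis (H2) bounds the bracket by $K\,G\,|\nabla w|$, where $G:=|\nabla u_1|+|\nabla u_2|+b_2$ and $K:=\max C_2$ over the compact range of $(|u_1|,|u_2|)$ (finite since the solutions are bounded). The structural key is that, by Lemma~\ref{regu}, $\nabla u_1,\nabla u_2\in L^N_{loc}(\Omega)$ and $b_2\in L^N_{loc}(\Omega)$, whence $G\in L^N_{loc}(\Omega)$ and, in particular, $G\in L^N(\{w\ge k\})$ since the latter set is compact in $\Omega$. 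Following \cite{BoMa,BeMeMuPo} one would tame the quadratic gradient term by testing with the exponential $\varphi=e^{v_k}-1$; as (H2) already furnishes a bound linear in $|\nabla w|$, the simpler choice $\varphi=v_k$ does the job.

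Taking $\varphi=v_k$, ellipticity (L) gives $\sum_{i,j}\int_\Omega a^{ij}\partial_i w\,\partial_j v_k=\sum_{i,j}\int_{\{w>k\}}a^{ij}\partial_i v_k\,\partial_j v_k\ge\eta\|\nabla v_k\|_{L^2}^2$, while the right-hand side is $\le K\int G\,|\nabla w|\,v_k$. Since $\nabla w=0$ a.e.\ on $\{w=m\}$, the integrand is supported in $\{k<w<m\}$; applying Hölder with exponents $(N,2,2^\ast)$, $\tfrac1N+\tfrac12+\tfrac1{2^\ast}=1$, and the Sobolev embedding $\|v_k\|_{L^{2^\ast}}\le C_S\|\nabla v_k\|_{L^2}$ (valid for $N\ge 3$, $v_k\in H^1_0$), I obtain
\[
\eta\,\|\nabla v_k\|_{L^2}^2 \;\le\; K\,C_S\,\|G\|_{L^N(\{k<w<m\})}\,\|\nabla v_k\|_{L^2}^2 .
\]
Letting $k\uparrow m$, the layers $\{k<w<m\}$ shrink to a null set, so absolute continuity of the integral (using $G\in L^N(\{w\ge k_0\})$ for a fixed $k_0>0$) forces $\|G\|_{L^N(\{k<w<m\})}\to 0$. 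Hence for $k$ close to $m$ the constant $KC_S\|G\|_{L^N(\{k<w<m\})}$ is $<\eta$, which yields $\nabla v_k=0$ and, since $v_k\in H^1_0(\Omega)$, $v_k\equiv 0$; thus $w\le k<m$, contradicting $m=\max_{\overline\Omega}w$. Therefore $m\le 0$, i.e.\ $u_1\le u_2$.

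The step I expect to be the crux is reconciling the quadratic/gradient growth of $H$ with the fact that $G$ is only $L^N_{loc}(\Omega)$, not $L^N(\Omega)$: a global energy estimate would be spoiled both by the size of $\|G\|_{L^N}$ and by its possible blow-up near $\partial\Omega$. Truncating at a level $k$ near the maximum is what circumvents this---continuity up to the boundary together with $w\le 0$ on $\partial\Omega$ (which rests on condition (A) and Lemma~\ref{regu}) confines the analysis to a compact subset where $G\in L^N$, and $k\uparrow m$ shrinks the transition layer so that $\|G\|_{L^N(\{k<w<m\})}$ becomes small enough to be absorbed by the ellipticity constant $\eta$. The remaining care is routine: checking that $v_k$ (or $e^{v_k}-1$) is an admissible test function with compact support and that the one-sided boundary conditions genuinely give $w\le 0$ on $\partial\Omega$.
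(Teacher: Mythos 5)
Your proof is correct and follows essentially the same route as the paper: test the subtracted weak inequalities with $(w-k)^+$ (admissible because continuity up to $\partial\Omega$ and the boundary signs give it compact support), bound the right-hand side via (H2), apply H\"older with exponents $(N,2,2^*)$ plus Sobolev, and absorb into the ellipticity constant $\eta$ once the $L^N$-norm of $G=|\nabla u_1|+|\nabla u_2|+b_2$ over the relevant set is small, reaching the same contradiction. The only (harmless) variation is bookkeeping --- where the paper removes all positive-measure level sets (its set $Z$) so that $k\mapsto\|G\|_{L^N(A_k\cap Z)}$ is continuous and vanishes at $k=\|v^+\|_\infty$, you remove only the top level set $\{w=m\}$ via the a.e.\ vanishing of $\nabla w$ there and let $k\uparrow m$ --- and note that your appeals to Lemma~\ref{regu} and condition (A) are unnecessary here, since $W^{1,N}_{loc}(\Omega)$ and ${\mathcal C}(\overline\Omega)$ regularity are hypotheses of this lemma (they are only verified via Lemma~\ref{regu} later, in the proof of Theorem~\ref{main}).
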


%Recall that $u_1$ is a {\it lower solution of \eqref{EE1}} if $u_1^+\in H^1_0(\Omega)$ and, for all $\varphi\in H^1_0(\Omega)\cap L^{\infty}(\Omega)$ with $\varphi\geq0$ a.e. $x\in \Omega$, we have
%$$
%\int_{\Omega} \nabla u_1\nabla \varphi \leq \int_{\Omega} H(x,u_1,\nabla u_1)\,\varphi + \int_{\Omega} h\,\varphi.
%$$
%In the same way, $u_2$ is an {\it upper solution of \eqref{EE1}} if $u_2^-\in H^1_0(\Omega)$ and, for all $\varphi\in H^1_0(\Omega)\cap L^{\infty}(\Omega)$ with $\varphi\geq0$ a.e. $x\in \Omega$, we have
%$$
%\int_{\Omega} \nabla u_2\nabla \varphi \geq \int_{\Omega} H(x,u_2,\nabla u_2)\,\varphi + \int_{\Omega} h\,\varphi.
%$$

\begin{proof} 
Here we adapt an argument from \cite{BeMeMuPo}, based in turn on an original idea from \cite{BoMa}.
Consider the function $v=u_1-u_2$, which satisfies 
\begin{equation}
\label{2}
\begin{array}{cl}
-L v \leq H(x,u_1,\nabla u_1)-H(x,u_2,\nabla u_2), &\mbox{in }\Omega,
\\
v\leq 0, &\mbox{on } \partial\Omega,
\\
v\in H^1(\Omega)\cap W^{1,N}_{loc}(\Omega) \cap {\mathcal C}(\overline\Omega).&
\end{array}
\end{equation}
\medbreak

For every $c\in\mathbb R$, let us consider the set 
$\Omega_c=\{x\in \Omega \, :\, |v(x)|=c\}$ and 
$$
J=\{c\in \mathbb R  \, :\, \mbox{meas\,}\Omega_c>0\}.
$$
As $|\Omega|$ is finite, $J$ is at most countable and, since for all $c\in \mathbb R$, $\nabla v =0$ a.e. on $\Omega_c$,  we also have
\begin{equation}
				\label{mes}
\nabla v=0 \mbox{ a.e. in } \bigcup_{c\in J} \Omega_c.
\end{equation}
Define
$
Z= \Omega \setminus \bigcup_{c\in J} \Omega_c 
$ and, for all $k\geq 0$, choose 
$ \varphi=(v-k)^+ \in H_0^1(\Omega )\cap L^\infty (\Omega)$ %. Using\footnote{\color{red} We need that the test function be positive}
% $\varphi= {\color{blue}(v-k)^+}$ 
 as test function in \eqref{2},
%This is possible as $v\in X$.
%Since $d(x) \leq 0$ on $\Omega$, 
%we 
to
 deduce by condition (L) that%,  for all $k\geq 0$,
$$
\eta \,\|\nabla  (v-k)^+\|^2_{L^2(\Omega)}  \leq 
\int_{\Omega}(H(x,u_1,\nabla u_1)-H(x,u_2,\nabla u_2))\, (v-k)^+ \, dx.
$$
%
%$$
%\begin{array}{rcl}
%\displaystyle
%\int_{\Omega}|\nabla v|^2\chi_{\{|v|\geq m\}} \, dx
%&=&
%\displaystyle
%\int_{\Omega}\mu(x) (\nabla u_1+\nabla u_2)\, \nabla v\, S_m(v) \, dx + \int_{\Omega}d(x)\, v\, S_m(v) \, dx
%\\
%&\leq&
%\displaystyle
%\int_{\Omega}\mu(x) (\nabla u_1+\nabla u_2)\, \nabla v\, S_m(v) \, dx.
%\end{array}
%$$
%Now let us consider the set $X\subset \mathbb R$ defined by
%$$
%X=\{c\in \mathbb R \mid \mbox{meas}\{x\in \Omega\mid |v(x)|=c\}>0\}.
%$$
%As $|\Omega|$ is finite, $X$ is at most countable. Define $Z^c\subset \Omega$ and $Z\subset \Omega$ by
%$$
%\begin{array}{c}
%\displaystyle
%Z^c=\bigcup_{c\in X}\{x\in \Omega\mid |v(x)|=c\}
%\\
%Z=\Omega \setminus Z^c.
%\end{array}
%$$
%Since, for all $c\in \mathbb R$, $\nabla v =0$ a.e. on $\{x\in \Omega\mid |v(x)|=c\}$ and since $X$ is at most countable, we have
%$$
%\nabla v=0 \mbox{ a.e. in } Z^c.
%$$
Let $A_k=\{ x\in\Omega \ : \ v(x)\geq k\}$.  By \eqref{mes}, (H2) and $\|u_1\|_{\infty}\leq R$, $\|u_2\|_{\infty}\leq R$ for some $R>0$ we obtain a constant $M>0$ such that 
% for all $m >0$,
\arraycolsep1.5pt
$$
\begin{array}{rcl}
\displaystyle
\eta \,\|\nabla  (v-k)^+\|^2_{L^2(\Omega)}
%&=&
%\displaystyle
%\int_{\Omega}|\nabla v|^2\chi_{\{|v|\geq m\}}
%\\
&\leq&
\displaystyle
\int_{A_k}(H(x,u_1,\nabla u_1)-H(x,u_2,\nabla u_2))\,  (v-k)^+ \, dx
\\
&\leq&
\displaystyle
M \int_{A_k \cap Z}  (|\nabla u_1|+|\nabla u_2|+b_2)\, | \nabla v |\,  (v-k)^+ \, dx
\\[3mm]
&=&
\displaystyle
M \int_{A_k \cap Z}  (|\nabla u_1|+|\nabla u_2|+b_2)\, | \nabla  (v-k)^+|\,  (v-k)^+ \, dx.
\end{array}
$$
Since  $v \in {\mathcal C}(\overline{\Omega})$ and $v\leq 0$ on $\partial\Omega$,  we have that $(v-k)^+$ has a compact support in $\Omega$, for all  $k >0$,  and hence
$(|\nabla u_1|+|\nabla u_2|+b(x))\in L^N(A_k \cap Z)$. This implies that
\begin{equation}
\label{3bb}
\begin{array}{rcl}
\eta \,\|\nabla  (v-k)^+\|^2_{L^2(\Omega)}
&\leq&
\displaystyle
M \||\nabla u_1|+|\nabla u_2|+b\|_{L^N(A_k \cap Z)}\|\nabla  (v-k)^+\|_{L^2(\Omega)} \| (v-k)^+\|_{L^{2^*}(\Omega)}
\\[2mm]
&\leq&
\displaystyle
\mathcal{S}^{-1}_NM \left\||\nabla u_1|+|\nabla u_2|+b\right\|_{L^N(A_k \cap Z)}\|\nabla (v-k)^+\|^2_{L^2(\Omega)},
\end{array}
\end{equation}
\arraycolsep5pt
where   $2^* =2N/(N-2)$ and $ \mathcal{S}_N$ denotes the Sobolev constant.
\medbreak

We want to prove that $v\leq0$. Indeed, assume by contradiction that $v^+\not\equiv 0$ and consider the non-increasing function  $F$  defined on ${]0, \| v^+\|_\infty]}$ by
$$
F(k)= \mathcal{S}^{-1}_NM \||\nabla u_1|+|\nabla u_2|+b\|_{L^N(A_k \cap Z)}, \ \ \forall \,  0<k<\| v^+\|_\infty
$$
and $F(\| v^+\|_\infty)=0$. By definition of $Z$ we have that $F$ is continuous and we can choose $0<k_0<\| v^+\|_\infty$ such that
$F(k_0) < \eta$. By \eqref{3bb}, 
$\eta \,\|\nabla  (v-k_0)^+\|^2_{L^2(\Omega)}\leq F(k_0) \|\nabla   (v-k_0)^+\|^2_{L^2(\Omega)}$, which implies that $\|\nabla  (v-k_0)^+\|_{L^2(\Omega)}=0$, i.e. 
$v\leq k_0< \| v^+\|_\infty$, a contradiction proving that necessarily $v^+=0$ and hence $u_1\leq u_2$. This concludes the proof.
%
%
%\noindent{\bf First case:} There exists $\varepsilon_0>0$ such that 
%$$
%C_N\|\mu\|_{\infty} \|\nabla u_1+\nabla u_2\|_{L^N(\{|v|\geq \varepsilon_0\}\cap Z)}> 1/2.
%$$
%In that case, consider the function $F:[\varepsilon_0,+\infty[ \to\mathbb R$ defined by
%$$
%F(m)= C_N\|\mu\|_{\infty} \|\nabla u_1+\nabla u_2\|_{L^N(\{|v|\geq m\}\cap Z)}.
%$$
%Observe that $F$ is non-increasing, tends to zero when $m$ tends to $\infty$ and $F(\varepsilon_0)>1/2$. Moreover, by definition of $Z$ we have that $F$ is continuous. Therefore, there exists $m_0 > \varepsilon_0$ such that $F(m_0)=1/2$. Hence the above inequality implies that  $\|\nabla S_{m_0}(v)\|_{2}=0$ i.e. $S_{m_0}(v)=0$ a.e. in $\Omega$. This means that $|v|\leq m_0$ a.e. in $\Omega$. But this contradicts the choice of $m_0$ such that $F(m_0)=1/2$ and this case cannot occur. \smallskip
%
%
%
%\noindent{\bf Second case:} For every $\varepsilon>0$,
%$$
%C_N\|\mu\|_{\infty} \|\nabla u_1+\nabla u_2\|_{L^N(\{|v|\geq \varepsilon\}\cap Z)}\leq 1/2.
%$$
%Then we deduce from \eqref{3bb}  that, for all $\varepsilon>0$,  $\|\nabla S_{\varepsilon}(v)\|_{L^2(\Omega)}^2=0$, i.e. $S_{\varepsilon}(v)=0$. This implies that, for all $\varepsilon>0$,   $|v|\leq \varepsilon$ a.e. in $\Omega$ and hence everywhere by continuity. As this is true for every $\varepsilon>0$ this implies that $v\equiv0$ and the lemma is proved.
\end{proof}

%\begin{proof}[Proof of Proposition \ref{UUniqueness}.]
%This follows directly from  Lemmas \ref{betterreg} and \ref{Colette}.
%\end{proof}

\begin{proof}[Proof of Theorem \ref{main}]
Let $u_1$ and $u_2$ be two solutions of (\ref{EE1}). By Lemma \ref{regu} we know that $u_1$ and $u_2$ belong to ${\mathcal C}(\overline{\Omega}) \cap W^{1,N}_{loc}(\Omega)$. Thus it follows from Lemma \ref{UUniqueness} that $u_1 = u_2$.
\end{proof}

\begin{remark}
In the proof of Theorem \ref{main} the requirement that $\Omega$ satisfies condition (A) is used to show that any solution of (\ref{EE1}) belongs to 
${\mathcal C}(\overline{\Omega})$. In turn this  property is used only in Lemma \ref{UUniqueness} to guarantee that for any solution $u$ of (\ref{EE1}) the set 
$A_k=\{ x \in \Omega : u(x) \geq k\}$ is compact for any $k >0$. It is an open question if the conclusion of Theorem \ref{main} holds true without assumption (A).
\end{remark}

As a corollary of  Lemma~\ref{UUniqueness} and  Theorem \ref{main} we obtain  the following result which, under the condition (A), 
improves 
the results in  \cite{BaBlGeKo,BaMu} concerning (\ref{EE1}).

\begin{cor} \label{corolario}
Assume that $\Omega$ satisfies condition (A) and that condition (L) holds. Let  
$H:\Omega\times\mathbb R\times \mathbb R^N \to \mathbb R$ 
be  a Carath\'eodory function satisfying (H1) and such that, 
for a.e. $x\in \Omega$,
 $H(x,\cdot,\cdot)\in {\mathcal C}^1(\R^{N+1})$ with
$$
\frac{\partial H}{\partial u}(x,u,\xi)\leq 0, \ \mbox{a.e. } x\in \Omega, \ \forall  u\in \mathbb R, \ \forall \xi \in \R^N.
$$
Assume moreover there exists  a function $b_3\in L^N_{loc}(\Omega)$ and a continuous nondecreasing function $C:\mathbb R^+\times \mathbb R^+\to \mathbb R^+$ such that
$$
\left|\frac{\partial H}{\partial \xi}(x,u,\xi)\right| \leq C(|u|)(|\xi|+ b_3(x)), \ \mbox{a.e. } x\in \Omega, \ \forall  u\in \mathbb R, \ \forall \xi \in \R^N.
$$
If $u_1, u_2\in H^1(\Omega)\cap W^{1,N}_{loc}(\Omega)\cap {\mathcal C}(\overline\Omega)$ are respectively  a lower solution and an upper solution of \eqref{EE1}, then $u_1\leq u_2$ in $\Omega$. In particular,  (\ref{EE1}) has at most one  solution.
\end{cor}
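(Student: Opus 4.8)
The plan is to deduce the corollary entirely from the machinery already in place: I would first check that the two hypotheses imposed here, namely the monotonicity \eqref{H3-1} and the gradient bound \eqref{H3-2}, together imply condition (H2), and then simply invoke Lemma~\ref{UUniqueness} for the comparison $u_1\leq u_2$ and Theorem~\ref{main} for the uniqueness. Since $H(x,\cdot,\cdot)\in\mathcal{C}^1(\R^{N+1})$ for a.e.\ $x$, the fundamental theorem of calculus is available, and it is essentially the only tool needed.

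To verify (H2), fix (for a.e.\ $x\in\Omega$) real numbers $u_1\geq u_2$ and vectors $\xi_1,\xi_2\in\R^N$, and split
$$
H(x,u_1,\xi_1)-H(x,u_2,\xi_2)=\bigl[H(x,u_1,\xi_1)-H(x,u_1,\xi_2)\bigr]+\bigl[H(x,u_1,\xi_2)-H(x,u_2,\xi_2)\bigr].
$$
For the second bracket, writing it as $\int_{u_2}^{u_1}\tfrac{\partial H}{\partial u}(x,s,\xi_2)\,ds$ and using $u_1\geq u_2$ together with \eqref{H3-1}, I would conclude that it is $\leq 0$ and hence can be discarded. For the first bracket I would integrate along the segment $\xi(t)=\xi_2+t(\xi_1-\xi_2)$,
$$
H(x,u_1,\xi_1)-H(x,u_1,\xi_2)=\int_0^1\frac{\partial H}{\partial\xi}\bigl(x,u_1,\xi(t)\bigr)\cdot(\xi_1-\xi_2)\,dt,
$$
and estimate the integrand by \eqref{H3-2}, using the elementary convexity bound $|\xi(t)|\leq|\xi_1|+|\xi_2|$.

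Putting the two pieces together gives
$$
H(x,u_1,\xi_1)-H(x,u_2,\xi_2)\leq C(|u_1|)\bigl(|\xi_1|+|\xi_2|+b_3(x)\bigr)\,|\xi_1-\xi_2|,
$$
which is precisely (H2) with $b_2:=b_3\in L^N_{loc}(\Omega)$ and $C_2(s,t):=C(s)$. Consequently (L)-(H1)-(H2) all hold, Lemma~\ref{UUniqueness} yields $u_1\leq u_2$ for the given lower and upper solutions, and Theorem~\ref{main} gives that \eqref{EE1} has at most one solution; equivalently, two solutions are each simultaneously a lower and an upper solution, so applying the comparison in both directions forces equality.

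I do not expect a genuine obstacle here: the corollary is essentially a repackaging of Lemma~\ref{UUniqueness} and Theorem~\ref{main}, so the only substantive point is the reduction to (H2). The mild care required lies in the sign bookkeeping—invoking $u_1\geq u_2$ exactly once, in order to discard the $u$-difference term—and in confirming that the resulting $b_2$ and $C_2$ have the regularity and continuity demanded in (H2), which they manifestly inherit from \eqref{H3-2}.
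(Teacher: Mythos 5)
Your proposal is correct and matches the paper's proof in essentially all respects: both reduce the corollary to verifying (H2) via the fundamental theorem of calculus---discarding the $\partial H/\partial u$ contribution by sign thanks to $u_1\geq u_2$ and \eqref{H3-1}, bounding the $\partial H/\partial\xi$ contribution by \eqref{H3-2}---and then invoke Lemma~\ref{UUniqueness} and Theorem~\ref{main}. The only difference is cosmetic: the paper integrates along the single segment $t\mapsto\bigl(tu_1+(1-t)u_2,\,t\xi_1+(1-t)\xi_2\bigr)$, whereas you use the two-step path $(u_2,\xi_2)\to(u_1,\xi_2)\to(u_1,\xi_1)$, which has the minor advantage of producing the constant $C(|u_1|)$ directly, without needing the nondecreasing property of $C$ to control the value of $C$ along the segment.
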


\begin{proof}
We just have to prove that (H2) holds. 
Let  $x\in \Omega$,  $u_1$, $u_2\in \mathbb R$ with $u_1\geq u_2$ and $\xi_1$, $\xi_2\in \mathbb R^N$. Define the function $F(t)=H(x,tu_1+(1-t)u_2,t\xi_1+(1-t)\xi_2)$, for every $t\in [0,1]$. Observe that $F\in {\mathcal C}^1(\mathbb R)$. Moreover we have
$$
\arraycolsep1.5pt
\begin{array}{rcl}
H(x,u_1,\xi_1)&-&H(x,u_2,\xi_2)  = F(1)-F(0)
\\
&=&
\displaystyle
\int_0^1 \frac{d}{dt}F(t)\,dt
\\
&=&
\displaystyle
\int_0^1 \frac{\partial H}{\partial u}(x,tu_1+(1-t)u_2,t\xi_1+(1-t)\xi_2)\,dt \, (u_1-u_2)
\\
&&
\displaystyle
+
\int_0^1 \frac{\partial H}{\partial \xi}(x,tu_1+(1-t)u_2,t\xi_1+(1-t)\xi_2) \cdot (\xi_1-\xi_2)\,dt
\\
&\leq& 
\displaystyle
\int_0^1 \left|\frac{\partial H}{\partial \xi}(x,tu_1+(1-t)u_2,t\xi_1+(1-t)\xi_2)\right|\,dt\,\, |\xi_1-\xi_2|
\\
&\leq& 
\displaystyle
\int_0^1 C_0(|tu_1+(1-t)u_2|)\, \left[|t\xi_1+(1-t)\xi_2|+ b_3(x)  \right]\,dt\,\, |\xi_1-\xi_2|
\\
&\leq& C_0(|u_1|+|u_2|)\, \left[|\xi_1|+|\xi_2|+ b_3(x)  \right]\,|\xi_1-\xi_2|.
\end{array}
$$
\arraycolsep5pt 
This proves that (H2) is valid and we can apply  Lemma~\ref{UUniqueness} and  Theorem \ref{main} to conclude.\end{proof}

%\begin{remark}\label{comparison}
%If we want to compare our Theorem \ref{main} to the results presented in \cite{BaMu, BaBlGeKo} ....
%\end{remark}

\appendix
\section{Sufficient conditions for  condition (A)}

We prove in this section that  $\Omega$ satisfies condition (A)  whenever $\partial \Omega$ is  Lipschitz. 
Recall   that, by \cite[Theorem 1.2.2.2]{Gr},   $\partial \Omega$ is  Lipschitz if and only if  the uniform  cone condition is satisfied.

\begin{definition} (\cite[Definition 1.2.2.1, p.10]{Gr})
Let $\Omega$ be an open subset of $\mathbb R^N$. We say that $\Omega$ satisfies  the {\it uniform cone property}
if, for every $x\in \partial\Omega$, there exists a neighbourhood $V$ of $x$ in $\mathbb R^N$ and new coordinates $\{y_1,\ldots, y_N\}$ such that

\noindent
\hangindent7mm%
\makebox[7mm][l]{(a)}%
$V$ is a hypercube in the new coordinates, i.e.,
$$
V=\{(y_1,\ldots, y_N) \in\mathbb R^N \mid -a_i<y_i<a_i, \,\, 1\leq i \leq N\}, 
$$
for some $a_i>0$, $i=1,2,\dots,N$;

\noindent
\hangindent7mm%
\makebox[7mm][l]{(b)}%
$y-z\in \Omega$ whenever $y\in \overline\Omega \cap V$ and $z\in C$, where $C$ is the open cone $\{z=(z',z_n)\mid (\cot \theta)|z'|<z_n<h\}$ for some $\theta \in \,]0,\pi/2]$ and some $h>0$.
\end{definition}

\begin{lem}\label{3.1}  
Let $\Omega \subset \R^N$ be a bounded domain. If $\partial \Omega$ is Lipschitz,
then $\Omega$ satisfies  the condition (A).
\end{lem}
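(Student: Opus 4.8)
The plan is to verify condition (A) directly from the uniform cone property, which by the cited Theorem 1.2.2.2 of \cite{Gr} is equivalent to $\partial\Omega$ being Lipschitz. The core geometric intuition is simple: near any boundary point, $\Omega$ locally lies on one side of a Lipschitz graph, so a fixed positive fraction of every small ball centered on the boundary must lie \emph{outside} $\Omega$ (namely, inside the reflected cone). I would extract the parameters $r_0$ and $\theta_0$ in the definition of condition (A) from the cone opening angle $\theta$ and the cone height $h$ furnished by the uniform cone property, taking care that these can be chosen uniformly over $\partial\Omega$.

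First I would invoke compactness: since $\partial\Omega$ is compact and the uniform cone property provides, for each $x\in\partial\Omega$, a neighbourhood $V_x$, an angle $\theta_x$, and a height $h_x$, a finite subcover together with the uniformity built into the statement lets me fix a single cone aperture $\theta>0$ and height $h>0$ valid everywhere. Next, fix $x\in\partial\Omega$ and work in the local coordinates where $\Omega\cap V$ sits above a Lipschitz graph. The key observation is that for each boundary point, the reflected cone $-C$ (the cone opening in the direction $-z_n$) has its vertex at $x$ and, for small enough radius, is entirely contained in $B_r(x)\setminus\overline\Omega$; indeed by condition (b), if a point $y$ near $x$ lay in $\Omega$ and $z\in C$, then $y-z\in\Omega$, which upon reflection forces the points of $x+(-C)$ near $x$ to avoid $\Omega$.

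The quantitative step is then to choose $r_0>0$ small enough (depending on $h$ and on the cube dimensions $a_i$) that the truncated reflected cone $(x+(-C))\cap B_r(x)$ is a genuine solid cone of aperture $\theta$ for all $0<r<r_0$, and to compute its volume as a fixed fraction of $\mbox{meas\,}B_r(x)$. A solid spherical cone of half-angle $\theta$ subtends a fraction of the ball depending only on $\theta$, so I obtain $\mbox{meas\,}\bigl((x+(-C))\cap B_r(x)\bigr)\geq c(\theta,N)\,\mbox{meas\,}B_r(x)$ with $c(\theta,N)>0$ independent of $x$ and $r$. Since this reflected cone lies in the complement of $\Omega$, any connected component $\Omega_r$ of $\Omega\cap B_r(x)$ satisfies
\[
\mbox{meas\,}\Omega_r \leq \mbox{meas\,}\bigl(B_r(x)\bigr)-\mbox{meas\,}\bigl((x+(-C))\cap B_r(x)\bigr)\leq (1-c(\theta,N))\,\mbox{meas\,}B_r(x),
\]
so setting $\theta_0=c(\theta,N)$ gives condition (A).

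The main obstacle, as I see it, is the bookkeeping in passing from the local graph description to a clean lower bound on the excluded volume that is uniform in both the center $x$ and the radius $r$. Two subtleties demand care: first, ensuring the reflected cone actually fits inside $B_r(x)$ once we are in a single fixed ball, which requires $r_0$ to be smaller than the cone height $h$ (and adapted to the cube); and second, handling the components $\Omega_r$ correctly, since condition (A) bounds \emph{each} component rather than all of $\Omega\cap B_r(x)$. The component bound is not harder than the global one here, because the reflected cone is connected and disjoint from $\Omega$ entirely, so the volume it excludes is unavailable to any single component; the estimate therefore goes through componentwise without additional argument.
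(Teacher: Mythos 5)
Your overall strategy---show that a solid cone with vertex at $x$ and fixed aperture is disjoint from $\Omega$, then bound its volume below by a fixed fraction of $\mbox{meas\,}B_r(x)$---is exactly the paper's, and your volume estimate, the uniformity-by-compactness remark, and the componentwise observation are all fine. However, your key geometric step is stated backwards, and as written it is false. With condition (b) as quoted (where $C$ is the cone opening in the $+z_n$ direction), applying (b) with $y=x\in\overline\Omega\cap V$ gives directly $x+(-C)=x-C\subset\Omega$: the downward cone lies \emph{inside} $\Omega$, not outside it. (Your phrase ``$\Omega\cap V$ sits above a Lipschitz graph'' is the source of the confusion; condition (b) makes $\Omega$ the side into which the downward cones point.) The cone that avoids $\Omega$ is the \emph{upward} cone $x+C$, and this cannot be obtained by ``reflecting'' (b): condition (b) is not symmetric under $z\mapsto -z$, since it only pushes points of $\overline\Omega$ downward into $\Omega$, never upward out of it. The correct argument is by contradiction, as in the paper: if $y\in(x+C)\cap\overline\Omega$, then $y-x\in C$, and provided $y\in V$ one may apply (b) at $y$ to conclude $x=y-(y-x)\in\Omega$, contradicting $x\in\partial\Omega$ (recall $\Omega$ is open).

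Note also that this contradiction argument needs $y\in V$, which is a genuine localization issue: the paper secures it by shrinking $h$ so that the distance from $x$ to $V^c$ exceeds $h/\cos\theta$, whence $x+C\subset V$. Your stated precaution (choosing $r_0$ so the cone ``fits inside $B_r(x)$'') addresses a different and less essential point and does not substitute for this. Once the cone direction is corrected and this localization added, the rest of your argument---the lower bound $\mbox{meas\,}\bigl((x+C)\cap B_r(x)\bigr)\geq c(\theta,N)\,\mbox{meas\,}B_r(x)$ for $r\leq h$, and the resulting bound on each component $\Omega_r$---goes through and recovers the paper's conclusion with $r_0=h$ and $\theta_0=c(\theta,N)$.
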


\begin{proof} 
As it has been mentioned we can assume that  $\partial \Omega$ satisfies 
the  uniform cone condition.
%with $C$ a cone of vertex $0$, radius $R$ and height $h$.
Arguing as in the proof of \cite[Theorem 1.2.2.2]{Gr}, we know that $\{x\}-C\subset \Omega$ but we can also observe that $\{x\}+C\subset \Omega^c$, at least if the distance from $x$ to $V^c$ is greater that $h/\cos\theta$; this last condition can always be achieved by choosing a smaller $h$. Indeed, if $(\{x\}+C)\cap \overline\Omega$ is not empty, let $y$ be a point in the intersection.  Then $y\in \overline\Omega\cap V$ since $|y_n-x_n|<h$ and  consequently, $\{y\}-C\subset \Omega$, but this contradicts the fact that $x\in \{y\}-C$.

For every $r>0$, we have that 
$\Omega \cap B_r(x) \subset B_r(x) \setminus (B_r(x)\cap (\{x\}+C))$ and hence
\begin{eqnarray*}
 \mbox{meas\,} (\Omega \cap B_r(x) ) &\leq  &
  \mbox{meas\,} B_r(x) -
    \mbox{meas\,} (B_r(x)\cap (\{x\}+C)) 
\\
&=&    \mbox{meas\,} B_r(x)  \left( 1 - \frac{ \mbox{meas\,} (B_r(x)\cap (\{x\}+C)) }{ \mbox{meas\,} B_r(x)} \right)
\\
&=&
\mbox{meas\,} (B_r(0)\cap C).
\end{eqnarray*}
%Since $C_x$ is congruent with $C$, 
%$$
% \mbox{meas\,} (B_r(x)\cap C_x) =  \mbox{meas\,} (B_r(0)\cap C) \,.
%$$
%Observe that the angle $\alpha$ of a generatrix  of the cone $C$ with its axis is given by

%\begin{wrapfigure}{r}{30mm}
%\begin{center}
%\includegraphics[width=0.4\textwidth]{Angle_cone.pdf}
%\end{center}
%\caption{Angle of a cone}
%\end{wrapfigure}

%$$ 
%\alpha = \arctan \frac{R}{h}\,.
%$$
If $r\leq h$, the cone $C(\theta , r \cos \theta)$ of
vertex $0$, opening angle
$\theta$ and height $r\cos \theta $ satisfies
$$
C(\theta , r \cos \alpha) \subset  (B_r(0)\cap C).
$$
%
%                 N=3
%
%
%Hence, 
%$$
%\frac{1}{3}\pi (r\sin \alpha )^2 r\cos \alpha \leq \mbox{meas\,}(B_r(0)\cap C)\,,
%$$
%and
%therefore
%\begin{eqnarray*}
% \mbox{meas\,} (\Omega \cap B_r(x) ) &\leq  &     \mbox{meas\,} B_r(x) 
%\left( 1 - \frac{ \mbox{meas\,} (B_r(0)\cap C) }{ \mbox{meas\,} B_r(0)} \right)
%\\
%&\leq &
%  \mbox{meas\,} B_r(x) 
%  \left( 1 - \frac{\frac{1}{3}\pi (r\sin \alpha )^2 r\cos \alpha }{ \frac{4}{3} \pi r^3} \right)
%\\
%&=&  \mbox{meas\,} B_r(x) \left(  1 - \frac{1}{4} \sin^2 \alpha  \cos \alpha \right)
%\,,
%\end{eqnarray*}
%i.e., condition (A) holds with $r_0=h$ and $\theta_0=\frac{1}{4} \sin^2 \alpha  \cos \alpha $. 
%\hfill $\square$
Hence, 
$$
\frac{1}{N+1}\left(\mbox{meas}_{\mathbb R^{N-1}} B_{\mathbb R^{N-1}}  (r\sin \theta)\right) r\cos \theta \leq \mbox{meas\,}(B_r(0)\cap C),
$$
and
therefore
\arraycolsep1.5pt
\begin{eqnarray*}
 \mbox{meas\,} (\Omega \cap B_r(x) ) &\leq  &     \mbox{meas\,} B_r(x) 
\left( 1 - \frac{ \mbox{meas\,} (B_r(0)\cap C) }{ \mbox{meas\,} B_r(0)} \right)
\\
&\leq &
  \mbox{meas\,} B_r(x) 
  \left( 1 - \frac{\frac{1}{N+1}\frac{\pi^{(N-1)/2}}{\Gamma ( \frac{N-1}{2}+1)}(r\sin \theta)^{N-1} r\cos \theta }{ \frac{\pi^{N/2}}{\Gamma (\frac{N}{2}+1)} r^N} \right)
\\
&=&  \mbox{meas\,} B_r(x) \left(  1 - \frac{\Gamma (\frac{N}{2}+1)\, (\sin \theta)^{N-1}  \cos \theta}{(N+1) \pi^{1/2}\,\Gamma ( \frac{N-1}{2}+1)} 
%\frac{\Gamma (\frac{N}{2}+1)}{\Gamma ( \frac{N-1}{2}+1)}
%(\sin \alpha)^{N-1}  \cos \alpha 
\right),
\end{eqnarray*}
\arraycolsep5pt
i.e., condition (A) holds with $r_0=h$ and 
$\theta_0=\frac{\Gamma (\frac{N}{2}+1)\, (\sin \theta)^{N-1}  \cos \theta}{(N+1) \pi^{1/2}\,\Gamma ( \frac{N-1}{2}+1)} $. 
\end{proof}

%Now we have
%
%\begin{lem}\label{meaningA}
%Let $\Omega \subset \R^N$ be a bounded domain. If $\partial \Omega$ is {\color{red}Lipschitz} then $\partial \Omega$ satisfies condition (A). 
%\end{lem}
%
%\begin{proof}
%From \cite[Theorem 1.2.2.2]{Gr} we know that the uniform  cone condition is equivalent to the fact that $\partial \Omega$ is {\color{red}Lipschitz}. We then conclude the proof using Lemma \ref{3.1}.
%\end{proof}

%%%%%%%%%%%%%%%%%%%%%%%%%%%%%%%%%%%%%

\end{document}